\documentclass[a4,11pt]{article}
\usepackage{amsthm}

\usepackage[a4paper]{geometry}
\setlength{\textwidth}{6.5in}
\setlength{\textheight}{8.7in}
\setlength{\topmargin}{0pt}
\setlength{\headsep}{0pt}
\setlength{\headheight}{0pt}
\setlength{\oddsidemargin}{0pt}
\setlength{\evensidemargin}{0pt}

\newcommand{\newnumbered}[2]{\newtheorem{#1}[theorem]{#2}}
\newcommand{\newunnumbered}[2]{\newtheorem{#1}[theorem]{#2}}
\newcommand{\Title}[2]{\title{#1}\newcommand{\Acknowledgements}{\section*{Acknowledgements} #2}}
\newcommand{\Author}[2][]{\author{#2}}

\newcommand{\br}{, }
\newcommand{\fs}{. }
\newcommand{\thanksone}[3][]{#1\thanks{#3\email{\tt #2}}}

\newcommand{\email}[1]{#1}
\newcommand{\classno}[2][2000]{}
\newcommand{\printscl}{}
\newcommand{\mktitle}{\maketitle}
\newcommand{\mkabstitle}{}

\usepackage[utf8x]{inputenc}
\usepackage{amsfonts}
\usepackage{amsmath}
\usepackage{amssymb}
\usepackage[mathscr]{eucal}
\usepackage{color}
\usepackage[colorlinks=true,citecolor=black,linkcolor=black,urlcolor=blue]{hyperref}
\usepackage{comment}
\newtheorem{theorem}{Theorem}[section] 
\newtheorem{lemma}[theorem]{Lemma}     


\newnumbered{assertion}{Assertion}    
\newnumbered{conjecture}{Conjecture}  
\newnumbered{definition}{Definition}
\newnumbered{hypothesis}{Hypothesis}
\newnumbered{remark}{Remark}
\newnumbered{note}{Note}
\newnumbered{observation}{Observation}
\newnumbered{problem}{Problem}
\newnumbered{question}{Question}
\newnumbered{algorithm}{Algorithm}
\newnumbered{example}{Example}
\newnumbered{res}{Result}
\newunnumbered{notation}{Notation} 
\numberwithin{equation}{section}

\renewcommand{\labelenumi}{\theenumi}

\newcommand{\noop}[1]{}

\DeclareMathOperator{\PG}{PG}
\DeclareMathOperator{\Q}{\mathsf{Q}^+}
\DeclareMathOperator{\T}{\mathcal{T}}


\begin{document}

\Title{On tight sets of hyperbolic quadrics}{%
The author is supported by
Basic Science Research Program through the National Research Foundation of Korea (NRF) funded
by the Ministry of Education (grant number NRF-2018R1D1A1B07047427).
}

\Author[Alexander L. Gavrilyuk]{%
\thanksone[Alexander L. Gavrilyuk]{alexander.gavriliouk@gmail.com}{%
Pusan National University\br
2, Busandaehak-ro 63beon-gil\br
Geumjeong-gu, Busan, 46241\br
Republic of Korea\fs
}
}

\classno{05E30 (primary), 05B15 (secondary)}

\date{\today}

\mktitle

\begin{abstract}
We prove that the parameter $x$ of a tight set $\mathcal{T}$ of a hyperbolic quadric $\Q(2n+1,q)$
of an odd rank $n+1$ satisfies ${x\choose 2}+w(w-x)\equiv 0\mod q+1$, where $w$ is the number of points
of $\mathcal{T}$ in any generator of $\Q(2n+1,q)$.
As this modular equation should have an integer solution in $w$ if such a $\mathcal{T}$ exists, 
this condition rules out roughly at least one half of all possible parameters $x$. 
It generalizes a previous result by the author and K. Metsch
shown for tight sets of a hyperbolic quadric $\Q(5,q)$ 
(also known as Cameron-Liebler line classes in $\PG(3,q)$).
\printscl
\end{abstract}

\mkabstitle

\section{Introduction}
Let $\PG(n,q)$ denote the projective space of dimension $n$
with underlying vector space $V:=\mathbb{F}_q^{n+1}$ over the finite field $\mathbb{F}_q$
with $q$ elements. 
For a non-degenerate quadratic (or reflexive sesquilinear) form $f$ on $V$,
the {\bf classical polar space} $\mathsf{P}$ associated with $f$ is the incidence structure
formed by the totally singular (or totally isotropic, respectively) subspaces of $f$
and their incidence is defined by symmetrized containment \cite{pps}.
We consider the elements of $\mathsf{P}$ as subspaces of $\PG(n,q)$,
so they are projective points, lines, $\ldots$.
A maximal subspace of $\mathsf{P}$ has dimension $r-1$, where $r$ is the Witt index of $f$, 
also called the {\bf rank} of $\mathsf{P}$, and such a subspace is called a {\bf generator}.

We will consider the polar space $\mathsf{Q}^+(2n+1,q)$ of rank $n+1$
defined by a hyperbolic quadric $f$, i.e., the set of projective points of $\PG(2n+1,q)$
satisfying $f({\bf x})=0$, where
\begin{equation*}
f({\bf x}):=f(x_0,\ldots,x_{2n+1})=x_0x_1 + \cdots + x_{2n}x_{2n+1},~{\bf x}\in V.
\end{equation*}

The associated bilinear form $b({\bf x},{\bf y}):=f({\bf x}+{\bf y})-f({\bf x})-f({\bf y})$
defines the {\bf polarity} $\perp$ of $\PG(2n+1,q)$.
Two points of the polar space are collinear if and only if $b({\bf x},{\bf y})=0$.
For a point $P$ of the quadric, denote by $P^{\perp}$ the set of points collinear with $P$,
which form the (tangent) hyperplane of $P$.
Note that $P\in P^{\perp}$, and, for a point set (or a subspace) $S$, let $S^{\perp}$ denote $\cap_{P\in S}P^{\perp}$.

The notion of tight sets was introduced by Payne \cite{Payne} for generalized quadrangles
(which include the classical polar spaces of rank $2$), and it was extended
to polar spaces of higher rank by Drudge \cite{D}.
Tight sets are extremal sets of points in the following sense.
It was shown \cite[Theorem~8.1]{D}
that, for a set $\mathcal{T}$ of points of a finite polar space $\mathsf{P}$ of rank $r$
over $\mathbb{F}_q$, the average number $\kappa$ of points of $\mathcal{T}$ collinear to a given
point of $\mathcal{T}$ is bounded above by
\begin{equation}\label{eq-tight}
\kappa\leq |\mathcal{T}|\frac{q^{r-1}-1}{q^r-1}+q^{r-1},
\end{equation}
and if equality attains, then $\mathcal{T}$ is said to be {\bf tight}.
Moreover, in this case $|\mathcal{T}|=x\frac{q^{r}-1}{q-1}$ holds for some non-negative integer $x$,
which is called the {\bf parameter} of the tight set $\mathcal{T}$.
The complement of an $x$-tight set is a $(q^{r-1}+1-x)$-tight set,
so that we may assume $x\leq \frac{q^{r-1}+1}{2}$.

The question is to determine for which parameters $x$ an $x$-tight set exists,
and to classify the examples admitting a given parameter $x$.
The answer fundamentally depends on the type and the rank of $\mathsf{P}$,
see \cite{tightpolar,BLP,dBM,M16,MW} for some recent results.
Besides that, geometric properties and characterizations of tight sets
are of interest, as they nicely interact with related structures
of polar spaces such as $m$-ovoids. This fact can be explained
from the point of view of algebraic graph theory \cite{tightpolar}, regardless of
the type of a polar space.

Note that a disjoint union of $x$ generators of $\mathsf{P}$, which are the only tight sets
with parameter $1$ \cite[Theorem~9.1]{D}, is itself a tight set with parameter $x$.
As for the hyperbolic polar spaces $\mathsf{Q}^+(2n+1,q)$, their tight sets in the partial case $n=2$
appeared in a different context in a paper by Cameron and Liebler \cite{CL}
as classes of lines in $\PG(3,q)$ satisfying certain geometric conditions.
While their equivalence was observed by Drudge \cite{D} via the Klein-correspondence,
they have been studied for almost three decades under the name of Cameron-Liebler line classes \cite{P91}.

Further,
Drudge \cite[Corollary~9.1]{D} proved that,
for $q\geq 4$, any $x$-tight set of $\mathsf{Q}^+(2n+1,q)$ with $x\leq \sqrt{q}$ is the disjoint union
of $x$ generators (hence, none such exist if $x>2$ and the rank $n+1$ is odd).
This bound was improved several times \cite{tightpolar,DGHS} and finally
Beukemann and Metsch \cite{BM} proved that the same conclusion holds
if $1\leq n\leq 3$ and $x\leq q$, or if $n\geq 4$, $q\geq 71$ and $x\leq q-1$.

A much stronger bound is shown \cite{M} for the hyperbolic polar space $\mathsf{Q}^+(5,q)$ of rank $3$:
the parameter $x$ of a tight set that is not a disjoint union of generators
should satisfy $x\geq cq^{4/3}$ with some constant $c$.
Moreover, the following result was obtained in \cite{GM}.


\begin{res}\label{main1}
Let $\mathcal{T}$ be a tight set with parameter $x$ of $\mathsf{Q}^+(5,q)$.
Then, for every generator $G$ of $\mathsf{Q}^+(5,q)$, the number $w:=|G\cap \mathcal{T}|$
satisfies
\begin{equation}\label{eqn_main}
{x\choose 2}+w(w-x)\equiv 0\mod q+1.
\end{equation}
\end{res}

Thus, if $\mathsf{Q}^+(5,q)$ has a tight set with parameter $x$,
then Eq. (\ref{eqn_main}) has a solution in $w$ from the set $\{0,1,\dots,q\}$. It was shown
in \cite{GM} that it implies a strong non-existence result for tight sets of $\mathsf{Q}^+(5,q)$,
as, for any given $q$, it rules out roughly at least half of the possible parameters $x$
from the set $\{3,\ldots,\frac{q^2+1}{2}\}$.

The main result of the present paper generalizes Result \ref{main1}
to the hyperbolic polar space $\mathsf{Q}^+(2n+1,q)$ of an arbitrary rank $n+1$ as follows.

\begin{theorem}\label{theo-main}
Let $\mathcal{T}$ be a tight set with parameter $x$ of $\mathsf{Q}^+(2n+1,q)$.
Then, for every generator $G$ of $\mathsf{Q}^+(2n+1,q)$, the number $w:=|G\cap \mathcal{T}|$ satisfies
\begin{equation}\label{eqn_main-even}
{x\choose 2}+w(w-x)\equiv 0\mod q+1,
\end{equation}
if $n$ is even, and
\begin{equation}\label{eqn_main-odd}
w(w-x)\equiv 0\mod q+1,
\end{equation}
if $n$ is odd.
\end{theorem}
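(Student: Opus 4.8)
The plan is to obtain the congruence from a \emph{second-order} counting argument, after first recording the linear equations that every tight set satisfies and noting that these alone are vacuous. Write $\theta_i=\frac{q^{i+1}-1}{q-1}$ for the number of points of a projective $i$-space. The collinearity graph of $\mathsf{Q}^+(2n+1,q)$ is strongly regular, and a tight set is precisely an intriguing set for its positive restricted eigenvalue $e=q^n-1$; equivalently, the characteristic vector of $\mathcal{T}$ lies in the span of the all-ones vector and the $e$-eigenspace. A short computation with the eigenvalue then yields the two standard equations: a point $P$ of the quadric is collinear with exactly $c_1:=x\theta_{n-1}+(q^n-1)$ points of $\mathcal{T}$ if $P\in\mathcal{T}$, and with exactly $c_0:=x\theta_{n-1}$ points of $\mathcal{T}$ if $P\notin\mathcal{T}$. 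I would record at the outset that these linear counts carry no information about $w$: fixing a generator $G$ (so that all points of $G$ are mutually collinear, and a point $Q\notin G$ of the quadric meets $G$ in the hyperplane $Q^{\perp}\cap G$ of $\theta_{n-1}$ points), the natural double count of collinear pairs $(P,Q)$ with $P\in G$ and $Q\in\mathcal{T}\setminus G$ collapses to the identity $\theta_n-\theta_{n-1}=q^n$. Hence \eqref{eqn_main-even}--\eqref{eqn_main-odd} are genuinely quadratic phenomena.

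The arithmetic engine of the statement is the relation $q\equiv-1\pmod{q+1}$, which gives $q^{m}\equiv(-1)^{m}$, and therefore $\theta_m\equiv 1$ for $m$ even and $\theta_m\equiv 0$ for $m$ odd, while the eigenvalue satisfies $e=q^n-1\equiv(-1)^n-1\pmod{q+1}$. Thus $e\equiv 0$ when $n$ is even and $e\equiv-2$ when $n$ is odd. This is exactly the source of the dichotomy in the theorem, and my proof would be organized so that the eigenvalue $e$ enters the final tally with coefficient $w$ (or $\binom{w}{2}$), so that its residue modulo $q+1$ is what distinguishes the two cases.

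To produce the $w^2$-term I would pass to the residue at a point $P\in G$: the quotient $P^{\perp}/P$ carries a hyperbolic quadric $\mathsf{Q}^+(2n-1,q)$ of rank $n$, its points are in bijection with the totally singular lines through $P$, and its generator $G/P$ corresponds to the lines of $G$ through $P$. Assigning to each residue point the number of points of $\mathcal{T}$ on the corresponding totally singular line produces a weighted set in $\mathsf{Q}^+(2n-1,q)$ that again satisfies the tight-set eigenvalue equation in rank $n$, with $P\in\mathcal{T}$ or $P\notin\mathcal{T}$ determining the two regimes through $c_1,c_0$. Summing the resulting rank-$n$ identities over $P\in G$, and using that the totally singular lines through $P$ inside $G$ partition $G\setminus\{P\}$, reassembles $w=|G\cap\mathcal{T}|$ and its square. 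The parity of the rank drops by one at each such reduction, which is the mechanism that toggles the presence of the $\binom{x}{2}$-term; the base case is the rank-$3$ statement, i.e.\ Result~\ref{main1}, together with the trivial ranks~$\le 2$.

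The hard part will be closing this induction cleanly. One must strengthen the statement to a weighted version so that the induced weighted set in the residue is again admissible, and then control the contribution of the points of $\mathcal{T}$ lying \emph{outside} $G$: the off-diagonal terms coming from pairs of totally singular lines must be shown to reorganize, modulo $q+1$, into precisely $\binom{x}{2}+w(w-x)$ (equivalently $\binom{w}{2}+\binom{x-w}{2}$) in the even case and to lose the $\binom{x}{2}$-term in the odd case. A delicate point here is the division by $2$ in $\binom{x}{2}$ when $q$ is odd, so that $q+1$ is even: the argument must keep the relevant counts of \emph{pairs} integral throughout rather than halving at the end. I expect this bookkeeping of the cross-terms, and the verification that the weighted residue set satisfies the correct equation, to be the principal technical obstacle, with everything else reducing to the eigenvalue identities and the elementary congruences for $\theta_m$ and $e$ recorded above.
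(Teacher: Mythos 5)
Your scaffolding matches the paper's: weighted tight sets induced on quotient quadrics, induction on the rank with Result~\ref{main1} as the base case, and the arithmetic $\theta_m\equiv 0$ or $1\pmod{q+1}$ together with the observation that one must work modulo $2(q+1)$ so that the $\binom{x}{2}$-term survives. But the proposal stops exactly where the proof has to be done, and it is missing one load-bearing ingredient entirely. A congruence in $x$ and $w$ alone can only come from evaluating one and the same quadratic quantity in two independent ways. In the paper that quantity is $\sum_{\ell\in\mathsf{L}(P_0)}m_\ell^2$, where $P_0\in\mathsf{Q}\setminus\mathcal{T}$ and $m_\ell=|\ell\cap\mathcal{T}|$ for the singular lines $\ell$ through $P_0$: Lemma~\ref{lemma-sum2} computes it \emph{exactly} as $x(\theta_{n-1}-1+x)$ by double counting collinear pairs $(P_1,P_2)$ with $P_1\in P_0^{\perp}\cap\mathcal{T}$ and $P_2\in\mathcal{T}\setminus P_0^{\perp}$, using Result~\ref{res-1} on the tangent and secant lines through $P_0$; Lemma~\ref{lemma-congr} computes the same sum modulo $2(q+1)$ by the rank-lowering recursion, and subtracting the two gives the theorem. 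Your proposal contains no analogue of the first computation: you correctly note that the \emph{linear} double count over a generator is vacuous, but you never exhibit a quadratic count whose exact value is known independently of the recursion. Without it, the residue construction only expresses a sum of squares at rank $n+1$ in terms of a sum of squares at rank $n$, and the induction yields an identity between unknowns rather than a congruence in $x$ and $w$.

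The second omission is the identity that drives the recursion itself. To relate $\sum_{P}\mu(P)^2$ to the corresponding sum one rank down, the paper squares the expression for $\mu(P_2)$, $P_2\notin P_0^{\perp}$, obtained from Property $(**)$, and the resulting cross-terms are controlled by a genuinely geometric fact (Lemma~\ref{lemma-lm}): for two points $P,P'$ of $P_0^{\perp}\setminus\{P_0\}$ not on a common singular line through $P_0$, the set $P^{\perp}\cap P'^{\perp}$ contains exactly $\lambda_n=q^{2n-4}$ points off $P_0^{\perp}$, irrespective of whether $\langle P,P'\rangle$ lies on the quadric. This constant is precisely what makes the off-diagonal terms ``reorganize'' as you hope; you flag that reorganization as the principal obstacle but supply no mechanism for it. Since both the exact evaluation and the cross-term lemma are where the content of the theorem lives, the proposal as written is a plan rather than a proof.
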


The argument from \cite[Section~3]{GM} shows that Eq. (\ref{eqn_main-even})
rules out roughly at least half of the possible parameters $x$ from the set
$\{3,\ldots,\frac{q^{n}+1}{2}\}$. Unfortunately, if $n$ is odd, it seems that
Eq. (\ref{eqn_main-odd}) does not impose any restrictions on $x$, which is consistent
with the existence of large sets of disjoint generators in $\mathsf{Q}^+(2n+1,q)$ in this case.

The proof of Theorem \ref{theo-main} generalizes a technique developed in \cite{GM,GM14}.
Suppose that $\mathcal{T}$ is a tight set with parameter $x$ of $\mathsf{Q}:=\mathsf{Q}^+(2n+1,q)$.
It follows \cite[Theorem~8.1]{D} that, for any point $P$ of $\mathsf{Q}$, one has
\begin{equation}\label{eq-tightset-point}
|P^{\perp}\cap \mathcal{T}|=q^n|\{P\}\cap \mathcal{T}|+x\theta_{n-1},
\end{equation}
where, for an integer $k\geq -1$, we define $\theta_k:=(q^{k+1}-1)/(q-1)$. 
Observe that $\theta_k$ is the number of points in the $k$-dimensional projective space
over $\mathbb{F}_q$, and $|\mathcal{T}|=x\theta_n$.

The following result \cite[Lemma~2.1]{BM} generalizes Eq. (\ref{eq-tightset-point})
to the subspaces of $\PG(2n+1,q)$, and it will play a crucial role in the proof.

\begin{res}\label{res-1}
Every $s$-dimensional subspace $S$, $s\leq n$, of the ambient space $\PG(2n+1,q)$ satisfies
the equality
\[
|S^{\perp}\cap \T|=q^{n-s}|S\cap \T|+x\theta_{n-s-1}.
\]
\end{res}

Drudge \cite[Theorem~8.1(3)]{D} proved a partial case of Result \ref{res-1}
when $S$ is a line not contained in $\mathsf{Q}$ (i.e., $S$ intersects $\mathsf{Q}$
in two non-collinear points).
For a point $P_0\in \mathsf{Q}\setminus \mathcal{T}$, applying Result \ref{res-1} in different settings,
we count the number of pairs $(P_1,P_2)$ of collinear points $P_1\in P_0^{\perp}\cap \mathcal{T}$,
$P_2\in (\mathsf{Q}\setminus P_0^{\perp})\cap \mathcal{T}$ in two ways,
which gives a certain equation. We then analyze this equation modulo $q+1$.

Note that via the field reduction \cite{K} one can construct
tight sets of $\mathsf{Q}^+(6b-1,q)$ from a variety of those of $\mathsf{Q}^+(5,q^b)$
that have been discovered recently \cite{CP17,CP18,CP19,DDMR,FMX,GMP18}.
Thus, $\mathsf{Q}^+(7,q)$ and $\mathsf{Q}^+(9,q)$ seem to be the first unexplored cases,
where we are not aware of any non-trivial tight sets.


\section{The proof of Theorem \ref{theo-main}}

We first recall some well-known properties of a hyperbolic polar space \cite{pps}.

\begin{res}\label{res-pps}
Let $\mathsf{Q}$ be a hyperbolic polar space $\Q(2n+1,q)$ or rank $n+1$.
\begin{enumerate}
\renewcommand{\labelenumi}{\rm (\alph{enumi})}
\item For every pair $P_1,P_2$ of non-collinear points of $\mathsf{Q}$,
$\{P_1,P_2\}^{\perp}$ is a hyperbolic quadric $\Q(2n-1,q)$ of rank $n$.
\item For every $s$-subspace $S\subset \mathsf{Q}$, the quotient space $S^{\perp}/S$
is a hyperbolic quadric of rank $n-s$ (over the same field).
\item The number of points of $\mathsf{Q}$ is $k_n:=(q^n+1)\theta_n$,
of which $k_{2,n}:=q^{2n}$ are not collinear to a given point.
\item For every pair of distinct collinear points of $\mathsf{Q}$,
the number of points of $\mathsf{Q}$ that are collinear to only one of them equals
$b_{n}:=q^{2n-1}$.
\end{enumerate}
\end{res}


\begin{lemma}\label{lemma-lm}
Let $\mathsf{Q}$ be a hyperbolic polar space $\Q(2n-1,q)$ or rank $n$.
For a point $P_0$ of $\mathsf{Q}$,
let $P$ and $P'$ be two distinct points of $P_0^{\perp}\setminus \{P_0\}$,
and $\ell:=\langle P,P'\rangle$. 
\begin{enumerate}
\renewcommand{\labelenumi}{\rm (\alph{enumi})}
\item If $\ell\subset \mathsf{Q}$ with $P_0\in \ell$, then
$P^{\perp}\cap P'^{\perp}\subseteq P_0^{\perp}$.
\item If $\ell\subset \mathsf{Q}$ with $P_0\notin \ell$,
then $P^{\perp}\cap P'^{\perp}$ contains precisely $\lambda_n:=q^{2n-4}$ points
of $\mathsf{Q}\setminus P_0^{\perp}$.
\item If $\ell\not\subset \mathsf{Q}$,
then $P^{\perp}\cap P'^{\perp}$ contains precisely $\lambda_n$ points
of $\mathsf{Q}\setminus P_0^{\perp}$.
\end{enumerate}
\end{lemma}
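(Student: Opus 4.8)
The plan is to work inside the hyperbolic polar space $\mathsf{Q}=\Q(2n-1,q)$ of rank $n$ and to compute, for the three cases of the line $\ell=\langle P,P'\rangle$, how the collinearity structure of $P$ and $P'$ interacts with the tangent hyperplane $P_0^\perp$. Throughout, I would exploit the fact that $P^\perp\cap P'^\perp=\ell^\perp=\{P,P'\}^\perp$, since $P^\perp$ and $P'^\perp$ are hyperplanes of $\PG(2n-1,q)$ and their intersection is exactly the set of points collinear with every point of the line they span. The key distinction is whether $\ell$ is totally singular (contained in $\mathsf{Q}$) or not, because this determines the structure of $\ell^\perp$ by Result~\ref{res-pps}.

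For part (a), I would argue purely projectively. If $\ell\subset\mathsf{Q}$ and $P_0\in\ell$, then every point $Q\in P^\perp\cap P'^\perp$ is collinear with both $P$ and $P'$, hence collinear with the whole line $\ell=\langle P,P'\rangle$ (as collinearity with two points of a totally singular line forces collinearity with their span via the bilinearity of $b$). Since $P_0\in\ell$, this immediately gives $Q\in P_0^\perp$, so $P^\perp\cap P'^\perp\subseteq P_0^\perp$. This case needs no counting.

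For parts (b) and (c), the strategy is a dimension-and-counting argument. In case (b), $\ell$ is a totally singular line with $P_0\notin\ell$; then $\ell^\perp=\{P,P'\}^\perp$ is a $(2n-3)$-subspace meeting $\mathsf{Q}$ in the quotient structure described by Result~\ref{res-pps}(b), namely $\ell^\perp/\ell$ is a hyperbolic quadric of rank $n-2$. In case (c), $\ell\not\subset\mathsf{Q}$, so $\ell$ meets $\mathsf{Q}$ in the two points $P,P'$ (which are collinear, as they lie in $P_0^\perp$); here $\{P,P'\}^\perp$ is a hyperbolic quadric $\Q(2n-3,q)$ of rank $n-1$ by Result~\ref{res-pps}(a). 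In each case I would count the total number of points of $\mathsf{Q}$ in $P^\perp\cap P'^\perp$ using Result~\ref{res-pps}(c), then subtract those lying in $P_0^\perp$. The points in $P_0^\perp\cap P^\perp\cap P'^\perp$ form the points of $\{P_0,P,P'\}^\perp$ together with the singular points of the relevant subspace that happen to lie on the tangent hyperplane of $P_0$; I would identify this as the intersection with one more tangent hyperplane, reducing the rank by one and again invoking Result~\ref{res-pps}(c) applied to the smaller quadric. The difference should collapse to $\lambda_n=q^{2n-4}$ in both cases, the equality of the two values being the structurally satisfying point that makes the later counting argument (summing over pairs $(P,P')$ regardless of whether $\ell$ is singular) go through uniformly.

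The main obstacle I anticipate is bookkeeping the intersection $P_0^\perp\cap\{P,P'\}^\perp$ correctly, i.e., verifying that imposing the extra condition ``$Q\in P_0^\perp$'' really does cut the ambient $\{P,P'\}^\perp$ by a non-tangent hyperplane (so that the residual quadric is again hyperbolic of one lower rank, rather than parabolic or degenerate). This hinges on checking that $P_0$, viewed inside the quotient polar space on $\{P,P'\}^\perp$, is a \emph{non-singular} point of that residual geometry — equivalently, that $P_0\notin\{P,P'\}^{\perp\perp}=\langle P,P'\rangle=\ell$, which holds in both (b) and (c) by hypothesis. Once this is confirmed, the point counts from Result~\ref{res-pps}(c) applied at ranks $n-1$, $n-2$, and the parabolic section determined by $P_0$ combine to give $q^{2n-4}$; the coincidence of the answer across cases (b) and (c) reflects the fact that the two missing points $P,P'$ (singular in case (b)'s quotient but not contributing to the non-collinear count) balance against the rank shift. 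I would carry out the two counts separately and then confirm they agree.
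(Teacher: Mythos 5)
Your overall strategy is the same as the paper's: part (a) by bilinearity, and for (b), (c) a count of $|\{P,P'\}^{\perp}\cap\mathsf{Q}|$ minus the part lying in $P_0^{\perp}$, organized via the quotient structures of Result \ref{res-pps}. However, the step you flag as the crux is resolved incorrectly, and as described it would give the wrong count. Since $P,P'\in P_0^{\perp}$, we have $P_0\in P^{\perp}\cap P'^{\perp}=\{P,P'\}^{\perp}$, and $P_0$ is a point of $\mathsf{Q}$; hence $P_0^{\perp}$ meets $\{P,P'\}^{\perp}$ in the \emph{tangent} hyperplane section at $P_0$ of the smaller quadric. That section is degenerate: a cone with vertex $P_0$ (in case (c)) or with vertex the totally singular plane $\langle P_0,\ell\rangle$ (in case (b)), over a hyperbolic quadric of lower rank — not a non-degenerate hyperbolic quadric of rank one less. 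Your proposed verification ``$P_0\notin\ell$'' only shows that $P_0$ is not in the radical/vertex of $\ell^{\perp}$; it does not make the section non-tangent, because $P_0$ still lies \emph{on} the quadric $\{P,P'\}^{\perp}$. Counting the intersection as a non-degenerate quadric of one lower rank yields the wrong number (e.g.\ for $n=3$, case (c): the tangent section of $\Q(3,q)$ at $P_0$ has $2q+1$ points, not $2$). The correct and shortest route in case (c) is to apply Result \ref{res-pps}(c) directly to the rank-$(n-1)$ hyperbolic quadric $\{P,P'\}^{\perp}$ and its point $P_0$: the points not collinear to $P_0$ number exactly $q^{2(n-2)}=q^{2n-4}$, with no subtraction needed. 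In case (b), $\ell^{\perp}\cap\mathsf{Q}$ is itself a cone with vertex $\ell$, so Result \ref{res-pps}(c) does not apply directly and one must count through the quotients $\ell^{\perp}/\ell$ and $\langle P_0,\ell\rangle^{\perp}/\langle P_0,\ell\rangle$, which is what the paper does, obtaining $\theta_1+k_{n-3}(\theta_2-\theta_1)-\theta_2-k_{n-4}(\theta_3-\theta_2)=q^{2n-4}$.

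A secondary error: in case (c) you assert that $P$ and $P'$ are collinear ``as they lie in $P_0^{\perp}$.'' Lying in $P_0^{\perp}$ only makes each of them collinear with $P_0$. In fact, two points of the quadric are collinear precisely when the line joining them is totally singular, so $\ell\not\subset\mathsf{Q}$ forces $P$ and $P'$ to be \emph{non}-collinear — which is exactly the hypothesis needed to invoke Result \ref{res-pps}(a) and conclude that $\{P,P'\}^{\perp}$ is a hyperbolic quadric of rank $n-1$. Your conclusion there is right, but the stated justification contradicts it.
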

\begin{proof}
Statement (a) is obvious.
To prove (b), we observe that $\ell^{\perp}/\ell$
is a hyperbolic quadric of rank $n-2$, 
whose points are the planes on $\ell$, 
and therefore
the number of points in $P^{\perp}\cap P'^{\perp}$
equals $\theta_1+k_{n-3}(\theta_2-\theta_1)$, of which
$\theta_2$ points are contained in the plane
$\langle P_0,\ell\rangle$ and
$k_{n-4}(\theta_3-\theta_2)$ points correspond to 
$\langle P_0,\ell\rangle^{\perp}/\langle P_0,\ell\rangle$.
Thus, the number of points in $(P^{\perp}\cap P'^{\perp})\setminus P_0^{\perp}$
equals:
\[
\theta_1+k_{n-3}(\theta_2-\theta_1) - \theta_2 - k_{n-4}(\theta_3-\theta_2)=q^{2n-4},
\]
which shows (b). The proof of (c) is similar.
\end{proof}

We will need the following technical lemma.
For a point $P$ of a hyperbolic quadric, define $\mathsf{L}(P)$
to be the set of lines on the quadric through $P$.

\begin{lemma}\label{lemma-mu}
Let $\mathsf{Q}$ be a hyperbolic quadric $\Q(2n-1,q)$ of rank $n$.
Suppose that $\mu$ is 
an integer-valued function defined
on the set of points of $\mathsf{Q}$ such that, for a positive integer $x$,
the following properties hold.
\begin{enumerate}
\renewcommand{\labelenumi}{\rm (\alph{enumi})}
\item[$(*)$] For every point $P$ of $\mathsf{Q}$:
\[
\sum_{P_1\in P^{\perp}}\mu(P_1) = x\theta_{n-2}+q^{n-1}\mu(P).
\]
\item[$(**)$] For every pair $P_1,P_2$ of non-collinear points of $\mathsf{Q}$:
\[
\sum_{P'\in \{P_1,P_2\}^{\perp}}\mu(P') = x\theta_{n-3}+q^{n-2}(\mu(P_1)+\mu(P_2)).
\]
\end{enumerate}
Then, for an arbitrary point $P_0$ of $\mathsf{Q}$,
one has:
\[
\sum_{P\in \mathsf{Q}}\mu(P)^2 =
\mu(P_0)^2 + (x-\mu(P_0))^2 + (q+1)\cdot\sum_{P_1\in P_0^{\perp}\setminus \{P_0\}}\mu(P_1)^2
-\sum_{\ell\in \mathsf{L}(P_0)}\big(\sum_{P_2\in \ell\setminus \{P_0\}}\mu(P_2)\big)^2.
\]
\end{lemma}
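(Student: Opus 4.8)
The plan is to reorganise the target identity into a statement purely about the points outside the tangent hyperplane $P_0^{\perp}$, and then to obtain that statement by double-counting a single auxiliary sum. Write $\Sigma_1:=\sum_{P\in P_0^{\perp}\setminus\{P_0\}}\mu(P)$, $\Sigma_2:=\sum_{P\in P_0^{\perp}\setminus\{P_0\}}\mu(P)^2$, $S_0:=\sum_{P_2\in\mathsf{Q}\setminus P_0^{\perp}}\mu(P_2)^2$, and let $Q_\ell:=\sum_{\ell\in\mathsf{L}(P_0)}\big(\sum_{P\in\ell\setminus\{P_0\}}\mu(P)\big)^2$ be the subtracted term. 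Since the points of $\mathsf{Q}$ split as $\{P_0\}\cup(P_0^{\perp}\setminus\{P_0\})\cup(\mathsf{Q}\setminus P_0^{\perp})$, one has $\sum_{P\in\mathsf{Q}}\mu(P)^2=\mu(P_0)^2+\Sigma_2+S_0$, so the claim is equivalent to the single equation $S_0=(x-\mu(P_0))^2+q\Sigma_2-Q_\ell$, which is what I would aim to prove.

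First I would record the linear data forced by $(*)$ and $(**)$. Summing $(*)$ over all points of $\mathsf{Q}$ and counting the resulting incidences in two ways yields the global value $\sum_{P\in\mathsf{Q}}\mu(P)=x\theta_{n-1}$; feeding this together with $(*)$ at $P_0$ back in gives the closed forms $\Sigma_1=x\theta_{n-2}+(q^{n-1}-1)\mu(P_0)$ and, for the non-collinear part, $M_0:=\sum_{P_2\in\mathsf{Q}\setminus P_0^{\perp}}\mu(P_2)=q^{n-1}(x-\mu(P_0))$. These are the only first-moment facts I expect to need.

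The core step is to evaluate $\sum_{P_2\in\mathsf{Q}\setminus P_0^{\perp}}g(P_2)^2$ in two ways, where $g(P_2):=\sum_{P\in\{P_0,P_2\}^{\perp}}\mu(P)$. On one hand, since $P_2$ is non-collinear to $P_0$, Result \ref{res-pps}(a) makes $\{P_0,P_2\}^{\perp}$ a rank-$(n-1)$ hyperbolic quadric, and $(**)$ gives $g(P_2)=a+q^{n-2}\mu(P_2)$ with $a:=x\theta_{n-3}+q^{n-2}\mu(P_0)$; expanding the square and inserting the first-moment data produces $a^2q^{2n-2}+2a\,q^{n-2}M_0+q^{2n-4}S_0$. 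On the other hand, expanding $g(P_2)^2$ as a double sum over $P,P'\in\{P_0,P_2\}^{\perp}\subseteq P_0^{\perp}\setminus\{P_0\}$ and swapping the order of summation turns the coefficient of $\mu(P)\mu(P')$ into the number of $P_2\in\mathsf{Q}\setminus P_0^{\perp}$ lying in $P^{\perp}\cap P'^{\perp}$. Here Lemma \ref{lemma-lm} is decisive: for distinct $P,P'$ that number equals $q^{2n-4}$ except when $P,P'$ lie on a common line of $\mathsf{Q}$ through $P_0$ (case (a)), where it is $0$, while the diagonal coefficient $P=P'$ is computed separately from Result \ref{res-pps} to be $q^{2n-3}$. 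Splitting the off-diagonal pairs into those on a common line through $P_0$ and the rest, the $\Sigma_2$-contributions of the excluded pairs cancel and this second evaluation collapses to $q^{2n-3}\Sigma_2+q^{2n-4}(\Sigma_1^2-Q_\ell)$.

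Equating the two evaluations and dividing by $q^{2n-4}$ isolates $S_0=q\Sigma_2+\Sigma_1^2-Q_\ell-\big(a^2q^2+2aq(x-\mu(P_0))\big)$, so it remains only to check $\Sigma_1^2-a^2q^2-2aq(x-\mu(P_0))=(x-\mu(P_0))^2$, i.e. $\Sigma_1=aq+x-\mu(P_0)$; this is a one-line identity once one notes $q\theta_{n-3}+1=\theta_{n-2}$. I expect the main obstacle to be the geometric bookkeeping in the core step: correctly pinning down, via Lemma \ref{lemma-lm} and Result \ref{res-pps}, the three distinct intersection counts (the generic value $q^{2n-4}$, the exceptional $0$ on lines through $P_0$, and the diagonal $q^{2n-3}$), and arranging the off-diagonal sum so that precisely $Q_\ell$ survives while the spurious $\Sigma_2$-terms cancel. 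The algebra afterwards is routine.
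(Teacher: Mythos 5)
Your proposal is correct and follows essentially the same route as the paper: partition $\mathsf{Q}$ by the tangent hyperplane of $P_0$, use Property $(**)$ to tie $\mu(P_2)$ to $g(P_2)=\sum_{P\in\{P_0,P_2\}^{\perp}}\mu(P)$, and evaluate $\sum_{P_2}g(P_2)^2$ via Lemma \ref{lemma-lm} (generic coefficient $q^{2n-4}$, zero for pairs on a line of $\mathsf{L}(P_0)$, diagonal $b_{n-1}=q^{2n-3}$). The only cosmetic difference is that you double-count $\sum g(P_2)^2$ while the paper solves for $\mu(P_2)$ in terms of $g(P_2)$ and expands $\sum\mu(P_2)^2$ directly; the intersection counts, the reduction of the off-diagonal sum to $\Sigma_1^2-Q_\ell$, and the final algebra are identical.
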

\begin{proof}
For a point $P_0\in \mathsf{Q}$, one can write:
\begin{equation}\label{eq-dist-sum}
\sum_{P\in \mathsf{Q}} \mu(P)^2 = \mu(P_0)^2 +
\sum_{P_1\in P_0^{\perp}\setminus \{P_0\}}\mu(P_1)^2 +
\sum_{P_2\in \mathsf{Q}\setminus P_0^{\perp}}\mu(P_2)^2.
\end{equation}

For a point $P_2\in \mathsf{Q}\setminus P_0^{\perp}$,
Property $(**)$ implies that:
\[
\mu(P_2)=
\frac{1}{q^{n-2}}\Big(\sum_{P'\in \{P_0,P_2\}^{\perp}}\mu(P')-q^{n-2}\mu(P_0)-x\theta_{n-3}\Big),
\]
which allows to rewrite the last sum in Eq. (\ref{eq-dist-sum}) as follows:
\begin{align*}
\sum_{P_2\in \mathsf{Q}\setminus P_0^{\perp}}\mu(P_2)^2 &=
\frac{1}{q^{2n-4}}\sum_{P_2\in \mathsf{Q}\setminus P_0^{\perp}}
\Big(\sum_{P'\in \{P_0,P_2\}^{\perp}}\mu(P')-q^{n-2}\mu(P_0)-x\theta_{n-3}\Big)^2\\
&=\frac{1}{q^{2n-4}}\sum_{P_2\in \mathsf{Q}\setminus P_0^{\perp}}
\Bigg(
\Big(\sum_{P'\in \{P_0,P_2\}^{\perp}}\mu(P')\Big)^2
-
2\Big(\sum_{P'\in \{P_0,P_2\}^{\perp}}\mu(P')\Big)\big(q^{n-2}\mu(P_0)+x\theta_{n-3}\big)\\
&+
\big(q^{n-2}\mu(P_0)+x\theta_{n-3}\big)^2
\Bigg).
\end{align*}

Further, we observe that
\begin{align*}
\sum_{P_2\in \mathsf{Q}\setminus P_0^{\perp}}\sum_{P'\in \{P_0,P_2\}^{\perp}}\mu(P')
&= b_{n-1}\cdot \sum_{P_1\in P_0^{\perp}\setminus \{P_0\}}\mu(P_1)
\mbox{~\big[holds by Result \ref{res-pps}(d)\big]}\\
&= q^{2n-3}\big((q^{n-1}-1)\mu(P_0)+x\theta_{n-2}\big)
\mbox{~\big[holds by Property $(*)$\big]},\\
\mbox{~and~}\sum_{P_2\in \mathsf{Q}\setminus P_0^{\perp}}1
&= k_{2,n-1}
\mbox{~\big[holds by Result \ref{res-pps}(b),(c)\big]}\\
&=q^{2n-2}.
\end{align*}

Thus, we obtain:
\begin{align*}
\sum_{P_2\in \mathsf{Q}\setminus P_0^{\perp}}\mu(P_2)^2 &=
\frac{1}{q^{2n-4}}\sum_{P_2\in \mathsf{Q}\setminus P_0^{\perp}}
\Big(\sum_{P'\in \{P_0,P_2\}^{\perp}}\mu(P')\Big)^2+
q^2\big(q^{n-2}\mu(P_0)+x\theta_{n-3}\big)^2\\
& -2q\big(q^{n-2}\mu(P_0)+x\theta_{n-3}\big)\big((q^{n-1}-1)\mu(P_0)+x\theta_{n-2}\big),
\end{align*}
where we shall evaluate the first double sum by using Lemma \ref{lemma-lm}.
Indeed, for any pair $P,P'$ of points of $P_0^{\perp}\setminus \{P_0\}$
such that $P_0\notin\langle P,P'\rangle$, there are precisely $\lambda_n$
points $P_2\in \mathsf{Q}\setminus P_0^{\perp}$ with $P,P'\in \{P_0,P_2\}^{\perp}$.
Thus, it follows that
\begin{align*}
\sum_{P_2\in \mathsf{Q}\setminus P_0^{\perp}}
\big(\sum_{P'\in \{P_0,P_2\}^{\perp}}\mu(P')\big)^2 &=
b_{n-1}\cdot \sum_{P_1\in P_0^{\perp}\setminus \{P_0\}}\mu(P_1)^2 + \lambda_n\cdot
\sum_{\ell\in \mathsf{L}(P_0)}\sum_{P\in \ell\setminus \{P_0\}}\mu(P)
\sum_{P'\in P_0^{\perp}\setminus \ell}\mu(P'),
\end{align*}
where the last triple sum can be rewritten as follows:
\begin{align*}
\sum_{\ell\in \mathsf{L}(P_0)}\sum_{P\in \ell\setminus \{P_0\}}\mu(P)
\sum_{P'\in P_0^{\perp}\setminus \ell}\mu(P') &=
\sum_{\ell\in \mathsf{L}(P_0)}\sum_{P\in \ell\setminus \{P_0\}}\mu(P)
\Big(
\sum_{P'\in P_0^{\perp}\setminus \{P_0\}}\mu(P')-
\sum_{P''\in \ell\setminus \{P_0\}}\mu(P'')
\Big)\\
&\Big[
\mbox{by~}\sum_{\ell\in \mathsf{L}(P_0)}\sum_{P\in \ell\setminus \{P_0\}}\mu(P)=
\sum_{P_1\in P_0^{\perp}\setminus \{P_0\}}\mu(P_1)
\Big]\\
&= \Big(\sum_{P_1\in P_0^{\perp}\setminus \{P_0\}}\mu(P_1)\Big)^2
-\sum_{\ell\in \mathsf{L}(P_0)}\big(\sum_{P\in \ell\setminus \{P_0\}}\mu(P)\big)^2\\
&= \big((q^{n-1}-1)\mu(P_0)+x\theta_{n-2}\big)^2
-\sum_{\ell\in \mathsf{L}(P_0)}\big(\sum_{P\in \ell\setminus \{P_0\}}\mu(P)\big)^2.
\end{align*}

Putting it all together and simplifying, we obtain:
\begin{align*}
\sum_{P\in \mathsf{Q}}\mu(P)^2  &=
\mu(P_0)^2 +
\sum_{P_1\in P_0^{\perp}\setminus \{P_0\}}\mu(P_1)^2 +
\sum_{P_2\in \mathsf{Q}\setminus P_0^{\perp}}\mu(P_2)^2 \\
&=
\mu(P_0)^2 +
(q+1)\cdot\sum_{P_1\in P_0^{\perp}\setminus \{P_0\}}\mu(P_1)^2
-\sum_{\ell\in \mathsf{L}(P_0)}\big(\sum_{P\in \ell\setminus \{P_0\}}\mu(P)\big)^2\\
&
+\big((q^{n-1}-1)\mu(P_0)+x\theta_{n-2}\big)^2
+
q^2\big(q^{n-2}\mu(P_0)+x\theta_{n-3}\big)^2\\
&-
2q\big((q^{n-1}-1)\mu(P_0)+x\theta_{n-2}\big)\big(q^{n-2}\mu(P_0)+x\theta_{n-3}\big)
\\
&=
\mu(P_0)^2 + \big(x-\mu(P_0)\big)^2
+(q+1)\sum_{P_1\in P_0^{\perp}\setminus \{P_0\}}\mu(P_1)^2
-\sum_{\ell\in \mathsf{L}(P_0)}\big(\sum_{P\in \ell\setminus \{P_0\}}\mu(P)\big)^2,
\end{align*}
and the lemma follows.
\end{proof}

One can see that a function $\mu$ satisfying the condition of Lemma \ref{lemma-mu}
generalizes the notion of tight sets.
The proof of the following lemma justifies this by showing that $\mu$ is a weighted
tight set \cite{BBI} and, moreover, given such a function $\mu$ one can construct
a weighted set for hyperbolic quadrics of smaller rank.

\begin{lemma}\label{lemma-weight}
Let $\mathsf{Q}$ be a hyperbolic quadric $\Q(2n-1,q)$ of rank $n$.
Suppose that $\mu$ is 
an integer-valued function defined
on the set of points of $\mathsf{Q}$ such that, for a positive integer $x$,
$\mu$ satisfies Property $(*)$ of Lemma \ref{lemma-mu}.
Then the following holds.
\begin{enumerate}
\renewcommand{\labelenumi}{\rm (\alph{enumi})}
\item[$(1)$] $\mu$ satisfies Property $(**)$ of Lemma \ref{lemma-mu}.
\item[$(2)$] For a point $P_0\in \mathsf{Q}$,
a function $\widetilde{\mu}$ on the points of the hyperbolic quadric $P_0^{\perp}/P_0$ defined by:
\[
\widetilde{\mu}(\ell)=\sum_{P\in \ell\setminus\{P_0\}}\mu(P),~~(\ell\in \mathsf{L}(P_0)),
\]
satisfies Property $(*)$ with $\widetilde{x}=(q-1)\mu(P_0)+x$.
\end{enumerate}
\end{lemma}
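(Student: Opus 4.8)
The plan is to treat the two assertions separately, deriving each from Property $(*)$ together with the elementary counts recorded in Result \ref{res-pps} and Lemma \ref{lemma-lm}. First I would extract the one global consequence of $(*)$ that both parts need: summing the identity in $(*)$ over all points $P$ of $\mathsf{Q}$ and interchanging the order of summation, the left-hand side becomes $\sum_{R}\mu(R)\,|R^{\perp}\cap\mathsf{Q}|$ with the \emph{constant} factor $|R^{\perp}\cap\mathsf{Q}|=k_{n-1}-q^{2n-2}$, so a short computation using $k_{n-1}=(q^{n-1}+1)\theta_{n-1}$ yields $\sum_{P\in\mathsf{Q}}\mu(P)=x\theta_{n-1}$. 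This is the weighted analogue of $|\mathcal{T}|=x\theta_n$ and already justifies viewing $\mu$ as a weighted tight set.

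For part $(2)$ I would work inside the tangent hyperplane $P_0^{\perp}$. Fix a line $\ell\in\mathsf{L}(P_0)$; the lines $m\in\mathsf{L}(P_0)$ collinear with $\ell$ in $P_0^{\perp}/P_0$ are exactly those contained in $\ell^{\perp}$, and since $\ell\subseteq P_0^{\perp}$ every point of $\mathsf{Q}\cap\ell^{\perp}$ is collinear with $P_0$, so the sets $m\setminus\{P_0\}$ partition $(\mathsf{Q}\cap\ell^{\perp})\setminus\{P_0\}$. Hence $\sum_{m}\widetilde{\mu}(m)=\big(\sum_{P\in\ell^{\perp}\cap\mathsf{Q}}\mu(P)\big)-\mu(P_0)$, and it suffices to evaluate $\sum_{P\in\ell^{\perp}\cap\mathsf{Q}}\mu(P)$. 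I obtain this by summing $(*)$ over the $q+1$ points of $\ell$: after interchanging summation the coefficient of $\mu(R)$ is $|\ell\cap R^{\perp}|$, equal to $q+1$ when $R\in\ell^{\perp}$ and to $1$ otherwise, so the double sum collapses to $q\sum_{\ell^{\perp}\cap\mathsf{Q}}\mu+\sum_{\mathsf{Q}}\mu$. Feeding in $\sum_{\mathsf{Q}}\mu=x\theta_{n-1}$ gives the clean identity $\sum_{\ell^{\perp}\cap\mathsf{Q}}\mu=q^{n-2}\sum_{P\in\ell}\mu(P)+x\theta_{n-3}$, and substituting $\sum_{P\in\ell}\mu(P)=\mu(P_0)+\widetilde{\mu}(\ell)$ together with $(q-1)\theta_{n-3}=q^{n-2}-1$ reproduces Property $(*)$ for $\widetilde{\mu}$ with exactly $\widetilde{x}=(q-1)\mu(P_0)+x$.

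Part $(1)$ is the crux: it is the $s=1$ case of the same line identity, but now for a \emph{secant} line $m=\langle P_1,P_2\rangle$, since $\{P_1,P_2\}^{\perp}=m^{\perp}$ and $(**)$ reads $\sum_{m^{\perp}\cap\mathsf{Q}}\mu=q^{n-2}\big(\mu(P_1)+\mu(P_2)\big)+x\theta_{n-3}$. The obstruction is that only two of the $q+1$ points of a secant line lie on $\mathsf{Q}$, so the telescoping of part $(2)$ is unavailable. Instead I would set $f:=\sum_{S}\mu$ with $S:=\{P_1,P_2\}^{\perp}\cap\mathsf{Q}$ and classify the points of $\mathsf{Q}$ by their collinearity with $P_1$ and with $P_2$; summing $(*)$ over $\{P_1,P_2\}$ gives one linear relation between $f$ and the total weight $E$ of the points collinear with exactly one of them, while $\sum_{\mathsf{Q}}\mu=x\theta_{n-1}$ gives a second. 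The decisive third relation comes from summing $(*)$ over the subquadric $S$: interchanging summation produces $\sum_{R}\mu(R)\,|\{P_1,P_2,R\}^{\perp}\cap\mathsf{Q}|$, and the coefficient is constant on each class of the partition, so I can solve the resulting $3\times 3$ system for $f$ and simplify to $(**)$. I expect the main difficulty to be exactly the evaluation of these coefficients, governed by the type of the plane $\langle P_1,P_2,R\rangle$: the induced form is non-degenerate when $R$ is collinear with neither point but has a rank-$2$ radical (lying on $\mathsf{Q}$) when $R$ is collinear with exactly one, so the tangent-versus-secant dichotomy must be resolved using the intersection data in Result \ref{res-pps} and Lemma \ref{lemma-lm}. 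As a fallback I would keep in reserve an induction on the rank: quotienting at a point $P_0\in S$ sends the non-collinear pair $(P_1,P_2)$ to a non-collinear pair of $P_0^{\perp}/P_0$, so part $(2)$ and the inductive form of $(**)$ could be used to descend, at the cost of relating sums over $S$ to sums over $S\cap P_0^{\perp}$.
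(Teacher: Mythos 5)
Your preliminary step (summing Property $(*)$ over all of $\mathsf{Q}$ to get $\sum_{P\in\mathsf{Q}}\mu(P)=x\theta_{n-1}$) and your proof of part $(2)$ are correct and essentially identical to the paper's: the paper likewise sums $(*)$ over the $q+1$ points of a fixed $\ell_0\in\mathsf{L}(P_0)$, notes that the points of $\ell_0^{\perp}$ are counted $q+1$ times and all other points once, and telescopes to Property $(*)$ for $\widetilde{\mu}$ with $\widetilde{x}=(q-1)\mu(P_0)+x$. For part $(1)$, however, you take a genuinely different route, and what you give is a plan rather than a proof. The paper argues spectrally: the collinearity graph of $\mathsf{Q}$ is strongly regular with eigenvalues $\sigma_0=qk_{n-2}$, $\sigma_1=q^{n-1}-1$, $\sigma_2=-(q^{n-2}+1)$; Property $(*)$ says precisely that the shifted vector $\mathbf{v}(P)=\mu(P)+x\theta_{n-2}/(q^{n-1}-1-qk_{n-2})$ is a $\sigma_1$-eigenvector, while Drudge's vector $\mathbf{w}_{P_1,P_2}=\mathbf{e}_{\{P_1,P_2\}^{\perp}}-\frac{\theta_{n-3}}{\theta_{n-1}}\mathbf{e}_{\mathsf{Q}}-q^{n-2}(\mathbf{e}_{\{P_1\}}+\mathbf{e}_{\{P_2\}})$ is a $\sigma_2$-eigenvector; orthogonality of the two eigenspaces, written out, is exactly Property $(**)$. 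This outsources all of the hard intersection combinatorics to a known eigenvector.

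Your replacement --- a $3\times 3$ linear system from summing $(*)$ over $\{P_1,P_2\}$, over $\mathsf{Q}$, and over $S:=\{P_1,P_2\}^{\perp}$ --- hinges entirely on the claim that $|S\cap R^{\perp}|$ is constant on each class of the partition of $\mathsf{Q}$ by collinearity with $P_1$ and $P_2$, together with the values of those constants; you explicitly defer this (``I expect the main difficulty to be exactly the evaluation of these coefficients''), and the tools you cite do not supply it: Lemma \ref{lemma-lm} concerns pairs $P,P'$ lying in a common tangent hyperplane $P_0^{\perp}$, whereas for $R$ collinear with neither $P_1$ nor $P_2$ you need the perp of a triad of pairwise non-collinear points, which appears nowhere in Result \ref{res-pps} or Lemma \ref{lemma-lm}. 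The claim is in fact true and your plan is salvageable: for such $R$ the plane $\langle P_1,P_2,R\rangle$ meets $\mathsf{Q}$ in a non-degenerate conic, so $\{P_1,P_2,R\}^{\perp}$ carries a parabolic quadric with $\theta_{2n-5}$ points independently of $R$ (in both odd and even characteristic), and for $R$ collinear with exactly one of $P_1,P_2$ a cone count gives $1+qk_{n-3}$; with these constants the system is non-singular (the coefficient of $f$ is $-q^{n-2}(q+1)\ne 0$ in the smallest case) and solves to $(**)$. But until those three coefficients are established and the system is actually solved, the decisive step of part $(1)$ is missing.
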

\begin{proof}
First observe that by Property $(*)$ and Result \ref{res-pps}(b),(c), we have
\begin{eqnarray*}
  \sum_{P\in \mathsf{Q}}\mu(P) &=& \frac{1}{q^{n-1}}\Big(\sum_{P\in \mathsf{Q}}\sum_{P_1\in P^{\perp}}\mu(P_1)\Big)-k_{n-1}\frac{x\theta_{n-2}}{q^{n-1}} \\
  &=& \frac{1+qk_{n-2}}{q^{n-1}}\Big(\sum_{P\in \mathsf{Q}}\mu(P)\Big)-k_{n-1}\frac{x\theta_{n-2}}{q^{n-1}},
\end{eqnarray*}
which implies that $M:=\sum_{P\in \mathsf{Q}}\mu(P)=x\theta_{n-1}$.

$(1)$ Recall that the collinearity graph of $\mathsf{Q}$, in which
two distinct points are adjacent whenever they are collinear, is strongly regular.
Therefore its symmetric $(0,1)$-adjacency matrix is diagonalizable with three distinct eigenvalues,
namely, $\sigma_0=qk_{n-2}$ with multiplicify one (and the all-one eigenvector),
$\sigma_1=q^{n-1}-1$ and $\sigma_2=-(q^{n-2}+1)$, see \cite[Lemma~8.3]{D}.

It now follows from Property $(*)$ that a vector $\mathbf{v}\in \mathbb{R}^{\mathsf{Q}}$
defined by
$\mathbf{v}(P)=\mu(P)+\frac{x\theta_{n-2}}{q^{n-1}-1-qk_{n-2}}$
is an eigenvector for $\sigma_1$, as it satisfies
\[
\mathbf{v}(P)=(q^{n-1}-1)\cdot\sum_{P_1\in P^{\perp}\setminus\{P\}}\mathbf{v}(P_1)\text{~for~any~}P\in \mathsf{Q}.
\]

Given two non-collinear points $P,P'$ of $\mathsf{Q}$, it follows from the proof of \cite[Theorem~8.1(3)]{D}
that
\[
\mathbf{w}_{P,P'}=\mathbf{e}_{\{P,P'\}^{\perp}}-\frac{\theta_{n-3}}{\theta_{n-1}}\mathbf{e}_{\mathsf{Q}}-q^{n-2}(\mathbf{e}_{\{P\}}+\mathbf{e}_{\{P'\}}),
\]
where $\mathbf{e}_X\in \mathbb{R}^{\mathsf{Q}}$ denotes the characteristic vector of a point set $X$,
is an eigenvector for $\sigma_2$.

Since the eigenspaces associated to $\sigma_1$ and $\sigma_2$ are orthogonal,
it follows that $\mathbf{v}$ is perpendicular with $\mathbf{w}_{P,P'}$. 
Evaluating $\langle \mathbf{v},\mathbf{w}_{P,P'}\rangle=0$ and simplifying, 
we find that Property $(**)$ follows.

$(2)$ We follow the proof of \cite[Lemma~2.1]{BM}.
Fix a line $\ell_0\in \mathsf{L}(P_0)$.
If we consider the subspaces $P^{\perp}$ for the $q+1$ points $P$ of $\ell_0$, then
the points of $\ell_0^{\perp}$ lie in each such subspace $P^{\perp}$, whereas
every point outside $\ell_0^{\perp}$ lies in exactly one of these.
By double counting, we find:
\begin{eqnarray*}
  q\Big(\sum_{P\in\ell_0^{\perp}}\mu(P)\Big) + M &=& \sum_{P_1\in \ell_0}\sum_{P_2\in P_1^{\perp}}\mu(P_2) \\
   &=& \sum_{P_1\in \ell_0}\big(q^{n-1}\mu(P_1)+x\theta_{n-2}\big) \\
   &=& q^{n-1}\widetilde{\mu}(\ell_0)+q^{n-1}\mu(P_0)+(q+1)x\theta_{n-2},
\end{eqnarray*}
where on the left-hand side we have:
\[
\sum_{P\in\ell_0^{\perp}}\mu(P)=\mu(P_0)+\widetilde{\mu}(\ell_0)+\sum_{\ell_1\in (\mathsf{L}(P_0)\setminus\{\ell_0\})\cap \ell_0^{\perp}}\widetilde{\mu}(\ell_1),
\]
and simplifying gives the desired property $(*)$ for $\widetilde{\mu}$.
\end{proof}

For the rest of the proof, we assume that
$\mathcal{T}$ is a tight set with parameter $x$
of a hyperbolic quadric $\mathsf{Q}=\Q(2n+1,q)$ of rank $n+1$.
Fix a point $P_0\in \mathsf{Q}\setminus \mathcal{T}$
and, for every line $\ell$ of $\mathsf{L}(P_0)$, put $m_{\ell}:=|\ell\cap \T|$.

\begin{lemma}\label{lemma-sum2}
The following holds:
\begin{equation}\label{eq-sum2}
\sum_{\ell\in \mathsf{L}(P_0)} m_{\ell}^2 = x(\theta_{n-1}-1 + x).
\end{equation}
\end{lemma}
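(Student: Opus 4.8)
The plan is to apply Lemma~\ref{lemma-mu} to the characteristic function of $\mathcal{T}$ on the full quadric $\mathsf{Q}=\Q(2n+1,q)$, that is, with the rank parameter of that lemma set to $n+1$ (so its ``$n$'' is replaced by $n+1$ throughout). Concretely, I would take $\mu:=\mathbf{e}_{\mathcal{T}}$, the $0$/$1$-valued function with $\mu(P)=1$ iff $P\in\mathcal{T}$. Before invoking the lemma I must check its two hypotheses for this $\mu$ and the given $x$. Property~$(*)$ (with $n$ replaced by $n+1$) reads $\sum_{P_1\in P^{\perp}}\mu(P_1)=x\theta_{n-1}+q^{n}\mu(P)$, which is exactly Eq.~(\ref{eq-tightset-point}); and Property~$(**)$ then follows automatically from Lemma~\ref{lemma-weight}(1), since that part of the lemma shows that $(*)$ implies $(**)$. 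Thus Lemma~\ref{lemma-mu} applies.

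Next I would specialize the conclusion of Lemma~\ref{lemma-mu} to our fixed point $P_0\in\mathsf{Q}\setminus\mathcal{T}$, for which $\mu(P_0)=0$. The decisive simplification is that $\mu$ is a characteristic function, so $\mu^2=\mu$ and every squared sum in the lemma collapses to a linear one that is already known. Explicitly, $\sum_{P\in\mathsf{Q}}\mu(P)^2=|\mathcal{T}|=x\theta_n$; using Eq.~(\ref{eq-tightset-point}) at $P_0$ together with $\mu(P_0)=0$ gives $\sum_{P_1\in P_0^{\perp}\setminus\{P_0\}}\mu(P_1)^2=|P_0^{\perp}\cap\mathcal{T}|=x\theta_{n-1}$; and since $P_0\notin\mathcal{T}$ we have $\sum_{P_2\in\ell\setminus\{P_0\}}\mu(P_2)=|\ell\cap\mathcal{T}|=m_{\ell}$ for every $\ell\in\mathsf{L}(P_0)$. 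Feeding these into the identity of Lemma~\ref{lemma-mu} turns it into the single scalar equation
\[
x\theta_n=x^2+(q+1)\,x\theta_{n-1}-\sum_{\ell\in\mathsf{L}(P_0)}m_{\ell}^2 .
\]

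Finally I would solve for the sum and tidy up. Rearranging gives $\sum_{\ell}m_{\ell}^2=x^2+(q+1)x\theta_{n-1}-x\theta_n$, and the only arithmetic needed is the standard identity $\theta_n=q\theta_{n-1}+1$, which yields $(q+1)\theta_{n-1}-\theta_n=\theta_{n-1}-1$ and hence $\sum_{\ell}m_{\ell}^2=x\bigl(\theta_{n-1}-1+x\bigr)$, as claimed. I expect no serious obstacle here: the real content is entirely front-loaded into Lemma~\ref{lemma-mu} and Lemma~\ref{lemma-weight}(1). The one point that demands care is the index bookkeeping---applying Lemma~\ref{lemma-mu} at rank $n+1$ rather than $n$, and matching its Property~$(*)$ with Eq.~(\ref{eq-tightset-point})---together with the observation that, precisely because $\mu$ is $0$/$1$-valued (unlike a general weight), the otherwise recursive right-hand side of Lemma~\ref{lemma-mu} becomes fully explicit.
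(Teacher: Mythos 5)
Your proof is correct, but it takes a genuinely different route from the paper. The paper proves Lemma~\ref{lemma-sum2} by a direct double count of the pairs $(P_1,P_2)$ with $P_1\in P_0^{\perp}\cap\mathcal{T}$, $P_2\in\mathcal{T}\setminus P_0^{\perp}$ and $P_1\in P_2^{\perp}$, evaluating both sides via Result~\ref{res-1} applied to the lines $\langle P_0,P_2\rangle$ and $\langle P_0,P_1\rangle$; the only external input is the Beukemann--Metsch lemma. You instead observe that Eq.~(\ref{eq-tightset-point}) is exactly Property~$(*)$ of Lemma~\ref{lemma-mu} at rank $n+1$ for $\mu=\mathbf{e}_{\mathcal{T}}$, invoke Lemma~\ref{lemma-weight}(1) to get $(**)$ for free, and then specialize the quadratic identity of Lemma~\ref{lemma-mu} at $P_0\notin\mathcal{T}$, where the $0/1$-valuedness collapses every squared sum except $\sum_{\ell}m_{\ell}^2$ to a known linear quantity; the arithmetic with $\theta_n=q\theta_{n-1}+1$ then checks out, and there is no circularity since Lemmas~\ref{lemma-mu} and~\ref{lemma-weight} precede Lemma~\ref{lemma-sum2} and do not depend on it. Your approach is arguably more economical, as it reuses machinery the paper needs anyway for Lemma~\ref{lemma-congr} and makes transparent that Lemma~\ref{lemma-sum2} is just the characteristic-function specialization of Lemma~\ref{lemma-mu}; the paper's double count is more elementary and self-contained, requiring only Result~\ref{res-1} rather than the heavier quadratic identity. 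The only point worth flagging is that applying Lemma~\ref{lemma-mu} at rank $n+1$ implicitly assumes the rank is large enough for the quantities in its proof (e.g.\ $\lambda_{n+1}$, $\theta_{n-2}$) to be meaningful, but this is no more restrictive than what the paper itself assumes.
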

\begin{proof}
We prove the result by double counting the number $E$ of pairs $(P_1,P_2)$
where $P_1\in P_0^{\perp}\cap \T$, $P_2\in \T\setminus P_0^{\perp}$ and
$P_1\in P_2^{\perp}$.

Observe that, by Eq. (\ref{eq-tightset-point}), there are precisely
$|\T\setminus P_0^{\perp}|=|\T|-|P_0^{\perp}\cap \T|=xq^n$ points $P_2$ of $\T$ that are not collinear to $P_0$.
By Result \ref{res-1} applied to a line $\langle P_0,P_2\rangle\not\subset \mathsf{Q}$, $P_2\in \mathcal{T}$,
each of them is collinear to
\[
|\langle P_0,P_2\rangle^{\perp}\cap \T|=q^{n-1}|\langle P_0,P_2\rangle\cap \T|+x\theta_{n-2}
=q^{n-1}+x\theta_{n-2}
\]
points $P_1\in P_0^{\perp}\cap \T$, as $\langle P_0,P_2\rangle\cap \T=\{P_2\}$.
Thus, $E$ equals $xq^n(q^{n-1}+x\theta_{n-2})$.

On the other hand, for a point $P_1\in P_0^{\perp}\cap \T$, the number of points
$P_2\in \T\setminus P_0^{\perp}$ that are collinear to $P_1$ equals
\[
|(P_1^{\perp}\setminus P_0^{\perp})\cap \T|=|P_1^{\perp}\cap \T|-|\langle P_0,P_1\rangle^{\perp}\cap \T|=
q^n+q^{n-1}(x - m_{\ell})
\]
where we apply Result \ref{res-1} to the line $\ell:=\langle P_0,P_1\rangle$,
and observe that this number does not depend on the particular choice of a point $P_1$ of $\ell \cap \T$.
Therefore, $E$ equals
\[
\displaystyle{\sum_{\ell\in \mathsf{L}(P_0)}}m_{\ell}(q^n+q^{n-1}(x - m_{\ell})),
\]
and note that by Eq. (\ref{eq-tightset-point}) and $P_0\notin \T$, we have:
\begin{eqnarray*}
  |P_0^{\perp}\cap \T| &=& \sum_{\ell\in \mathsf{L}(P_0)}m_{\ell} \\
   &=& x\theta_{n-1}.
\end{eqnarray*}

Thus, we obtain that
\[
\displaystyle{\sum_{\ell\in \mathsf{L}(P_0)}}m_{\ell}(q^n+q^{n-1}(x - m_{\ell}))=xq^n(q^{n-1}+x\theta_{n-2}),
\]
which simplifies to
\[
\sum_{\ell\in \mathsf{L}(P_0)} m_{\ell}^2 = x(\theta_{n-1}-1 + x),
\]
and the lemma follows.
\end{proof}

In the following lemma, using Lemma \ref{lemma-mu},
we obtain another result for the left-hand side of Eq. (\ref{eq-sum2}).
Let $c\in \{0,1\}$ be defined such that $n\equiv c \pmod 2$.

\begin{lemma}\label{lemma-congr}
For any generator $G$ of $\mathsf{Q}$ on $P_0$ with $w:=|G\cap \mathcal{T}|$,
the following equality holds:
\[
\sum_{\ell\in \mathsf{L}(P_0)}m_{\ell}^2 \equiv x(\theta_{n-1}-c)+(-1)^c\cdot 2w(x-w)+cx^2
 \mod 2(q+1).
\]
\end{lemma}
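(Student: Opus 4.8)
The plan is to evaluate the sum $\sum_{\ell\in\mathsf{L}(P_0)}m_\ell^2$ by applying Lemma~\ref{lemma-mu} with a suitable choice of $\mu$, and then reduce the resulting identity modulo $2(q+1)$. The natural candidate is the characteristic function of $\mathcal{T}$: set $\mu(P)=|\{P\}\cap\mathcal{T}|$ on the points of a quadric of rank $n$. To obtain such a quadric I would restrict attention to $P_0^\perp/P_0$, which by Result~\ref{res-pps}(b) is a hyperbolic quadric $\Q(2n-1,q)$ of rank $n$. The tight-set equation~(\ref{eq-tightset-point}) together with Result~\ref{res-1} shows that $\mu$ descends to a function $\widetilde{\mu}$ on $P_0^\perp/P_0$ satisfying Property~$(*)$ of Lemma~\ref{lemma-mu}; indeed this is exactly the content of Lemma~\ref{lemma-weight}(2), which gives $\widetilde{x}=(q-1)\mu(P_0)+x=x$ since $P_0\notin\mathcal{T}$. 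Here $\widetilde{\mu}(\ell)=\sum_{P\in\ell\setminus\{P_0\}}\mu(P)=m_\ell$, so the lines of $\mathsf{L}(P_0)$ are precisely the points of the smaller quadric and $\widetilde{\mu}$ records the intersection numbers $m_\ell$.

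Next I would apply Lemma~\ref{lemma-weight}(1) to conclude that $\widetilde{\mu}$ also satisfies Property~$(**)$, so that the full hypothesis of Lemma~\ref{lemma-mu} holds on $P_0^\perp/P_0$ with parameter $x$ and rank $n$. The master identity of Lemma~\ref{lemma-mu}, applied to this quadric at an arbitrary base point $\ell_0$ corresponding to a line of $\mathsf{L}(P_0)$, then expresses $\sum_\ell m_\ell^2$ (the total $\sum\widetilde\mu^2$) in terms of $\widetilde\mu(\ell_0)^2$, $(x-\widetilde\mu(\ell_0))^2$, a sum $(q+1)\sum\widetilde\mu^2$ over collinear neighbours, and a subtracted sum of squares over lines through $\ell_0$. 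The factor $(q+1)$ attached to the neighbour sum is the source of the modular reduction: everything multiplied by $(q+1)$ vanishes modulo $(q+1)$, and I would arrange the base point so that the remaining terms are controlled.

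The key to introducing the generator $G$ is the choice of base point in the master identity: I would iterate, descending through a flag $P_0\subset\ell_0\subset\pi_0\subset\cdots\subset G$ inside $G$, so that at each stage the distinguished point is the image of $G$ in the successive quotient and its $\widetilde\mu$-value relates to $w=|G\cap\mathcal{T}|$. Each application of Lemma~\ref{lemma-mu} peels off a contribution of the form $\mu(P_0')^2+(x-\mu(P_0'))^2$ together with a $(q+1)$-divisible remainder, and the recursion runs for $n$ steps until the quadric has rank~$1$ (a hyperbolic line, two points). The parity of $n$, encoded by $c$, governs how many such peeling terms survive and whether the final rank-$1$ quadric contributes the extra $cx^2$ and the sign $(-1)^c$; this is exactly the dichotomy recorded in the statement. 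Tracking the accumulated non-$(q+1)$-divisible terms across the $n$ iterations, and verifying that they collapse to $x(\theta_{n-1}-c)+(-1)^c\,2w(x-w)+cx^2$ modulo $2(q+1)$, is the main obstacle: one must keep careful account of signs, of the doubling from the two surviving points at the base, and of how $\theta_{n-2}$-type terms telescope into $\theta_{n-1}$. I would verify the telescoping by an induction on $n$, checking the base cases $n=1,2$ directly against Lemma~\ref{lemma-sum2} and Result~\ref{main1}.
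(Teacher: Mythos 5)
Your proposal follows essentially the same route as the paper: take $\mu(\ell)=m_{\ell}$ on the rank-$n$ quotient $P_0^{\perp}/P_0$ (which satisfies the hypotheses of Lemma \ref{lemma-mu} via Result \ref{res-1} and Lemma \ref{lemma-weight}), apply the identity of Lemma \ref{lemma-mu} at a base point lying in $G$, descend along a flag inside $G$ using the construction of Lemma \ref{lemma-weight}(2), and close the computation by induction on $n$ with the rank-$3$ case (Result \ref{main1}) as base. One caution on your phrase that the $(q+1)$-weighted neighbour sum ``vanishes modulo $(q+1)$'': the congruence must be carried out modulo $2(q+1)$ (otherwise the final factor of $2$ in front of ${x\choose 2}+w(w-x)$ cannot be cancelled for odd $q$), so that term does not vanish; the paper handles it by replacing $\sum\mu(\ell_1)^2$ with $\sum\mu(\ell_1)$ via $m^2\equiv m\pmod 2$ and then evaluating the latter by Property $(*)$, and your bookkeeping must include this step.
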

\begin{proof}
Recall that $P_0^{\perp}/P_0$ is a hyperbolic quadric of rank $n$,
whose points are the lines of $\mathsf{L}(P_0)$.
Let $\mu$ be a function defined on $\mathsf{L}(P_0)$ such that
$\mu$ satisfies the condition of Lemma \ref{lemma-mu}.
By Lemma \ref{lemma-mu}, for an arbitrary line $\ell_0\in\mathsf{L}(P_0)$
and the set $\mathsf{P}(\ell_0)$ of planes of $\mathsf{Q}$ on the line $\ell_0$,
we obtain that
\[
\sum_{\ell\in\mathsf{L}(P_0)}\mu(\ell)^2 =
\mu(\ell_0)^2 +\big(x-\mu(\ell_0)\big)^2 
+ (q+1)\sum_{\ell_1\in \mathsf{L}(P_0)\cap (\ell_0^{\perp}\setminus \{\ell_0\})}\mu(\ell_1)^2
-\sum_{\pi\in \mathsf{P}(\ell_0)}\big(\sum_{\ell'\in \mathsf{L}(P_0)\cap (\pi\setminus \{\ell_0\})}\mu(\ell')\big)^2,
\]
which is congruent modulo $2(q+1)$ to
\begin{equation}\label{eq-induction}
\mu(\ell_0)^2 +\big(x-\mu(\ell_0)\big)^2
+ (q+1)\big((q^{n-1}-1)\mu(\ell_0)+x\theta_{n-2}\big)
-\sum_{\pi\in \mathsf{P}(\ell_0)}\big(\sum_{\ell'\in \mathsf{L}(P_0)\cap (\pi\setminus \{\ell_0\})}\mu(\ell')\big)^2, 
\end{equation}
as modulo $2$ we have $m^2\equiv m$ for every integer $m$ and
$\sum_{\ell_1\in \mathsf{L}(P_0)\cap (\ell_0^{\perp}\setminus \{\ell_0\})}\mu(\ell_1)=(q^{n-1}-1)\mu(\ell_0)+x\theta_{n-2}$
by Property $(*)$.

Further, consider a hyperbolic quadric $\ell_0^{\perp}/\ell_0$, which has rank $n-1$ and whose
points are the planes of $\mathsf{P}(\ell_0)$,
and, as in Lemma \ref{lemma-weight}(2), define a function $\widetilde{\mu}$ by
\[ 
\widetilde{\mu}(\pi):=\sum_{\ell'\in \mathsf{L}(P_0)\cap (\pi\setminus \{\ell_0\})}\mu(\ell')
\]
on the set $\mathsf{P}(\ell_0)$.
By Lemma \ref{lemma-weight},
$\widetilde{\mu}$ satisfies the condition of Lemma \ref{lemma-mu}
for the hyperbolic quadric $\ell_0^{\perp}/\ell_0$,
and therefore the last double sum in Eq. (\ref{eq-induction}) can be evaluated by induction on $n$.

We now define $\mu$ by $\mu(\ell):=m_{\ell}$ for a line $\ell\in\mathsf{L}(P_0)$.
By Result \ref{res-1}, $\mu$ satisfies the condition of Lemma \ref{lemma-mu}.
For $n+1=3$, it follows from the proof of \cite[Theorem~3.1]{GM} that
\[
\sum_{\ell\in\mathsf{L}(P_0)}\mu(\ell)^2 \equiv
x(q+1)+2w(x-w)
\mod 2(q+1),
\]
where $w=\sum_{\ell\in \mathsf{L}(P_0)\cap \pi}\mu(\ell)$, i.e.,
the number of points of $\T$ in any plane $\pi$ on $P_0$.

Therefore, for $n+1=4$, from Eq. (\ref{eq-induction}) we obtain that
\begin{align*}
\sum_{\ell\in\mathsf{L}(P_0)}m_{\ell}^2 &\equiv
m_{\ell_0}^2 +\big(x-m_{\ell_0}\big)^2
+ (q+1)((q^{2}-1)m_{\ell_0}+x\theta_{1})
-\sum_{\pi\in \mathsf{P}(\ell_0)}\widetilde{\mu}(\pi)^2 \\
&\equiv
m_{\ell_0}^2 +\big(x-m_{\ell_0}\big)^2
+ (q+1)((q^{2}-1)m_{\ell_0}+x\theta_{1})
-\big(\widetilde{x}(q+1)+2\widetilde{w}(\widetilde{x}-\widetilde{w})\big)
\mod 2(q+1),
\end{align*}
where $\widetilde{w}:=\sum_{\pi\in \mathsf{P}(\ell_0)\cap G}\widetilde{\mu}(\pi)=|(G\setminus \{\ell_0\})\cap \T|$
so that $w=\widetilde{w}+m_{\ell_0}$
for any 3-dimensional space $G$ of $\mathsf{Q}$ on $\ell_0$,
and this simplifies to
\[
\sum_{\ell\in\mathsf{L}(P_0)}m_{\ell}^2 \equiv
x(q^2+q+x)
-2w(x-w)
\mod 2(q+1).
\]

Arguing in the same manner for $n+1>4$ by induction, a routine (but tedious) check shows that
\[
\sum_{\ell\in \mathsf{L}(P_0)}m_{\ell}^2 \equiv x(\theta_{n-1}-1+x)-2w(x-w)
 \mod 2(q+1).
\]
if $n$ is odd, and
\[
\sum_{\ell\in \mathsf{L}(P_0)}m_{\ell}^2 \equiv x\theta_{n-1}+2w(x-w)
 \mod 2(q+1).
\]
if $n$ is even,
which shows the lemma.
\end{proof}

We are now in a position to prove our main result.
By Lemmas \ref{lemma-sum2} and \ref{lemma-congr},
we obtain that, if $n$ is odd then
\begin{eqnarray*}
  \sum_{\ell\in \mathsf{L}(P_0)} m_{\ell}^2 &\equiv & x(\theta_{n-1}-1 + x) \\
   &\equiv& x(\theta_{n-1}-1+x)-2w(x-w)
 \mod 2(q+1),
\end{eqnarray*}
i.e.,
\[
  w(w-x) \equiv 0 \mod (q+1),
\]
and if $n$ is even then
\begin{eqnarray*}
  \sum_{\ell\in \mathsf{L}(P_0)} m_{\ell}^2 &\equiv & x(\theta_{n-1}-1 + x) \\
   &\equiv& x\theta_{n-1}+2w(x-w)
 \mod 2(q+1),
\end{eqnarray*}
i.e.,
\[
  {x \choose 2}+w(w-x) \equiv 0 \mod (q+1),
\]
which completes the proof of Theorem \ref{theo-main}.

\Acknowledgements

\bibliographystyle{abbrv}
\bibliography{references}

\end{document}